\providecommand{\U}[1]{\protect\rule{.1in}{.1in}}
\newtheorem{theorem}{Theorem}
\newtheorem{corollary}[theorem]{Corollary}
\newtheorem{definition}[theorem]{Definition}
\newtheorem{lemma}[theorem]{Lemma}
\newtheorem{proposition}[theorem]{Proposition}
\newenvironment{proof}[1][Proof]{\noindent\textbf{#1.} }{\ \rule{0.5em}{0.5em}}
\begin{document}

\title{Planarity, duality and Laplacian congruence}
\author{Derek A. Smith and Lorenzo Traldi\\Lafayette College\\Easton Pennsylvania 18042
\and William Watkins\\California State University Northridge\\Northridge, California 91330}
\date{}
\maketitle

%%%%%
%%%%%

\begin{abstract}
We discuss the connections tying Laplacian matrices to abstract duality and
planarity of graphs.

AMS Subject Classification: 05C50

Keywords: congruence, cut, dual, flow, graph, Laplacian, lattice, matroid, planar, trace

\end{abstract}

%%%%%
%%%%%

\section{Introduction\label{sec:intro}}

The purpose of this paper is to discuss the relationships between Laplacian matrices and three important properties of graphs: planarity for individual graphs, 2-isomorphism (isomorphism of cycle matroids) for pairs of graphs, and duality for pairs of planar graphs. These ideas have long histories, and have been studied carefully and thoroughly for the better part of a century. The Laplacian matrix is motivated by Kirchoff's laws, which were formulated in the mid 1800s, before matrices were introduced. Planarity was studied by Kuratowski and Whitney in the 1920s and 1930s; Whitney also introduced the 2-isomorphism and duality relations at that time. The existence of a connection between Laplacian matrices and graph matroids is implicit in the matrix-tree theorem, which was formulated before World War II. (See \cite{KMPS} for some notes on the history of the matrix-tree theorem.) However it was not until the early 1990s that Watkins \cite{Wa1,Wa2} showed that the cycle matroid of a graph is determined by the Laplacian matrix, except for the fact that the Laplacian ignores loops.

\begin{theorem} (Watkins \cite{Wa1,Wa2})
\label{Watkins} 
If $G_1$ and $G_2$ are graphs with the same number of loops, then $G_1$ and $G_2$ are 2-isomorphic if and only if their reduced Laplacian matrices are congruent over $\mathbb Z$.
\end{theorem}

To keep the introduction readable, we delay technical definitions (2-iso\-morphism, congruence, reduced Laplacian, etc.)\ until later in the paper. 

%When the definition of the reduced Laplacian is stated, it will be obvious that the matrix ignores loops. This is the reason Theorem \ref{Watkins} requires the hypothesis that $G_1$ and $G_2$ have the same number of loops.

In 1997, Bacher, de la Harpe and Nagnibeda \cite{BHN} proved that the cycle matroid of a graph is related to the lattices of cuts and flows. (See also the account of Godsil and Royle \cite[Chapter 14]{GR}.) Here are several of their results. The statements are augmented with obvious requirements involving bridges and loops.

%independently rediscovered one direction of Theorem \ref{Watkins}, as part of their investigation of the cut and flow lattices. They also brought duality into the conversation.

\begin{theorem}(Bacher, de la Harpe and Nagnibeda
\cite{BHN})
\label{bhn}
Let $G_1$ and $G_2$ be graphs.
\begin{enumerate}
    \item If $G_1$ and $G_2$ are 2-isomorphic, then they have the same number of loops and their cut lattices are isomorphic.
    \item If $G_1$ and $G_2$ are 2-isomorphic, then they have the same number of bridges and their flow lattices are isomorphic.
    \item If $G_1$ and $G_2$ are dual planar graphs, then $G_1$ has the same number of loops as $G_2$ has bridges, and the cut lattice of $G_1$ is isomorphic to the flow lattice of $G_2$.
\end{enumerate}
\end{theorem}

Bacher, de la Harpe and Nagnibeda observed that the reduced Laplacians of a graph are Gram matrices for the cut lattice; see \cite[p.\ 183]{BHN}, although the term ``Laplacian'' does not appear there. This implies that two graphs have isomorphic cut lattices if and only if they have congruent reduced Laplacians, and this in turn implies that part 1 of Theorem \ref{bhn} is equivalent to one direction of Theorem \ref{Watkins}, and the converse of part 1 of Theorem \ref{bhn} is equivalent to the other direction of Theorem \ref{Watkins}.

Not realizing that Watkins had already proven a result equivalent to the converse of part 1, Bacher, de la Harpe and Nagnibeda left the converses of the three parts of Theorem \ref{bhn} as open problems in \cite{BHN}. In 2010, Su and Wagner \cite{SW} verified the converses of all three parts, and also extended the theory to the cut and flow lattices of regular matroids. 

%Like Theorem \ref{Watkins}, their statements are complicated by the fact that the cut lattice ignores loops, and the flow lattice ignores bridges. 
%As our attention is focused on graphs in this paper, we state their theorem only for graphs.

%\begin{theorem}(Su and Wagner \cite{SW})
%\label{sw}
%Let $G_1$ and $G_2$ be graphs. 
%\begin{enumerate}
  %  \item $G_1$ and $G_2$ are 2-isomorphic if and only if they have the same number of loops, and their cut lattices are isomorphic.
   % \item $G_1$ and $G_2$ are 2-isomorphic if and only if they have the same number of bridges, and their flow lattices are isomorphic.
    %\item $G_1$ and $G_2$ are abstract duals if and only if the number of bridges in each is equal to the number of loops in the other, and the flow lattice of each is isomorphic to the cut lattice of the other.
%\end{enumerate}
%\end{theorem}

The present paper grew out of our work on the problem of extending Theorem \ref{Watkins} to describe the relationship between Laplacian matrices of dual graphs. By the time we appreciated the connection with the work of Bacher, de la Harpe and Nagnibeda \cite{BHN} and Su and Wagner \cite{SW}, we had developed a set of ideas that includes our own proof of a result equivalent to part 3 of Theorem \ref{bhn}, and the converse. Our arguments are focused on matrices associated with graphs, rather than lattices associated with regular matroids. Of course a lattice may be identified with the collection of its Gram matrices, so the two approaches are not incompatible in theory; but the arguments are quite different.

The notion we develop is that in addition to having an unreduced Laplacian matrix that is uniquely defined up to simultaneous permutation of the rows and columns, a graph $G$ has ``unreduced dual Laplacian matrices,'' which are not uniquely defined but are congruent to each other over $\mathbb Z$. (The definition appears in Section \ref{sec:dlap}.) There are also reduced dual Laplacians, just as there are reduced Laplacians. Some properties of these matrices are analogues of properties of Laplacians; for instance the reduced dual Laplacian matrices of $G$ are Gram matrices for the flow lattice of $G$. 

Dual Laplacian matrices also have some properties that are rather different from properties of Laplacians. Two of these properties involve planarity and abstract duality.

\begin{theorem}
\label{trace}
Suppose $G$ is a graph with $m$ edges and $b$ bridges.
\begin{enumerate}
\item The trace of an unreduced dual Laplacian matrix of $G$ is an even integer, greater than or equal to $2(m-b)$.

\item $G$ is planar if and only if $G$ has an unreduced dual Laplacian matrix whose trace is equal to $2(m-b)$.
\end{enumerate}
\end{theorem}

Theorem \ref{trace} can be restated using the lattice terminology of Conway
\cite{C}: $G$ is planar if and only if its flow lattice has a superbase with a Gram matrix of the smallest possible trace.

\begin{theorem}
\label{duals}
Let $G$ be a planar graph. Then the following statements about a graph $G^{\ast}$ are equivalent.

\begin{enumerate}
\item $G$ and $G^{\ast}$ are abstract duals.

\item The number of loops in $G^{\ast}$ is the same as the number of bridges
in $G$, and a reduced Laplacian matrix of $G^{\ast}$ is a reduced dual Laplacian matrix of $G$.
\end{enumerate}
\end{theorem}

Here is an outline of the paper. In Section \ref{sec:lap} we summarize the connections tying congruence of Laplacian matrices to row equivalence of incidence matrices, and 2-isomorphism of graphs. A small example is presented in Section \ref{sec:exam}. Reduced and unreduced dual Laplacian matrices are defined in Section \ref{sec:dlap}, and the analogies between them and ordinary Laplacians are discussed. In Section \ref{sec:dlapgen} we
verify some properties of dual Laplacians. Theorems \ref{Watkins}, \ref{trace} and \ref{duals} are proven in Section
\ref{sec:center}; we also discuss a version of Theorem \ref{Watkins} for unreduced Laplacians, and we relate Theorem \ref{trace} to a famous planarity criterion of MacLane~\cite{Ma}. A couple of illustrative examples are presented in Section \ref{sec:examples}.

Before proceeding we should thank an anonymous reader for good advice, which significantly improved the readability of the paper.

\section{Some properties of Laplacian matrices}
\label{sec:lap}
 We use standard notation and terminology for graphs. A graph $G$ has a finite set $E(G)$ of edges, and a finite set $V(G)$ of vertices; we write $m=|E(G)|$ and $n=|V(G)|$. If $e \in E(G)$ is incident on $v,w \in V(G)$ then we write $e=vw$. If $e=vv$ then $e$ is a \emph{loop} at $v$; the number of loops in $G$ is denoted $\ell$. Two edges $e\neq e^{\prime}$ are \emph{parallel} if $e=vw$ and $e^{\prime}=vw$. A graph is \emph{simple} if it has neither loops nor parallels. 

\begin{definition}
\label{lapmatu}Let $G$ be a graph with $V(G)=\{v_{1},\dots,v_{n}\}$. Then the
\emph{Laplacian matrix} of $G$ is the $n\times n$ matrix with entries given
by
\[
L(G)_{ij}=%
\begin{cases}
- | \{ e \in E(G) \mid e=v_i v_j \}|\text{,} & \text{if }i\neq j\\
| \{ e \in E(G) \mid e=v_i v_k \text{ and } k \neq i \}|\text{,} & \text{if }i=j
\end{cases}
.
\]
\end{definition}

Six elementary properties of the Laplacian are immediately apparent from
Definition \ref{lapmatu}; we number them for ease of reference.

\textbf{Property I} $L(G)$ is a symmetric matrix with integer entries.

\textbf{Property II} $L(G)$ is not changed if loops are added to $G$ or
removed from $G$.

\textbf{Property III} If $G$ and $G^{\prime}$ are graphs then
$L(G)=L(G^{\prime})$ up to simultaneous permutation of the rows and columns
if, and only if, we obtain isomorphic graphs when we remove all loops from $G$
and $G^{\prime}$.

\textbf{Property IV} The sum of the columns of $L(G)$ is $\mathbf{0}$; and the same for
the rows.

Here the bold $\mathbf{0}$ denotes a matrix or vector whose entries all equal $0$.

\textbf{Property V} The trace $Tr(L(G))$ is $2(m-\ell)$.

\textbf{Property VI} If $G$ is a disconnected graph with connected components
$C_{1},\dots,C_{c(G)}$ then
\[
L(G)=%
\begin{pmatrix}
L(C_{1}) & \mathbf{0} & \mathbf{0}\\
\mathbf{0} & \ddots & \mathbf{0}\\
\mathbf{0} & \mathbf{0} & L(C_{c(G)})
\end{pmatrix}
\text{.}
\]

Properties IV and VI tell us that in each
connected component of $G$, the row of $L(G)$ corresponding to one vertex is the negative of the sum of the remaining rows. The same holds for the columns, of course.

\begin{definition}
\label{reducel}Let $V_{0}$ be a subset of $V(G)$, which contains precisely one
vertex from each connected component of $G$. The submatrix of $L(G)$ obtained
by removing all rows and columns corresponding to elements of $V_{0}$ is a
\emph{reduced} Laplacian of $G$, denoted $L_{V_{0}}(G)$.
\end{definition}

Reduced Laplacian matrices inherit properties I, II and VI directly from
$L(G)$. The reduced version of property V is an inequality: $Tr(L_{V_{0}%
}(G))<2(m-\ell)$ unless $\ell=m$, in which case $L_{V_{0}}(G)$ is the empty $0 \times 0$ matrix. The reduced version of property IV
is the famous matrix-tree theorem: $\det L_{V_{0}%
}(G)$ is the number of maximal forests of $G$. Details are given in many standard references, e.g.\ \cite[Theorem 13.2.1]{GR}.

The reduced version of property III is complicated by the arbitrary choice of
$V_{0}$. If $G$ and $G^{\prime}$ are graphs then these two statements are
equivalent: (a) when we adjoin a row and column to each of $L_{V_{0}%
}(G),L_{V_{0}^{\prime}}(G^{\prime})$ so that the row and column sums of both
matrices are $\mathbf{0}$, we obtain matrices that are equal up to simultaneous
permutation of the rows and columns; and (b) when we remove all loops from $G$
and $G^{\prime}$, identify all the vertices from $V_{0}$ to each other, and
identify all the vertices from $V_{0}^{\prime}$ to each other, we obtain
isomorphic connected graphs.

We also use fairly standard terminology when discussing matrices associated with graphs. Recall that a square matrix of integers $U$ is \emph{unimodular} or \emph{invertible over} $\mathbb Z$ if $\det U=\pm1$.

\begin{definition}
\label{sroweqdef}Two matrices $B$ and $B^{\prime}$ are \emph{strictly row
equivalent} over $\mathbb{Z}$ if and only if $B^{\prime}=UB$, where $U$ is unimodular.
\end{definition}

\begin{definition}
\label{roweqdef}Two matrices $B$ and $B^{\prime}$ are \emph{loosely row
equivalent} over $\mathbb{Z}$ if and only if
\[%
\begin{pmatrix}
B^{\prime}\\
\mathbf{0}
\end{pmatrix}
=U%
\begin{pmatrix}
B\\
\mathbf{0}
\end{pmatrix}
\text{,}%
\]
where $U$ is unimodular and the two $\bf 0$ submatrices may be of different sizes.
\end{definition}

Row equivalence can also be described using elementary operations. Two
matrices are strictly row equivalent over $\mathbb{Z}$ if and only if one can
be obtained from the other using some finite sequence of elementary row
operations over $\mathbb{Z}$, i.e., multiplying a row by $-1$, adding a
nonzero multiple of one row to another and permuting rows. For loose row
equivalence, it is also permissible to adjoin $\mathbf{0}$ rows, or remove them. A third way to describe row equivalence is that two $k$-column matrices are loosely row
equivalent if and only if their rows generate the same subgroup of
$\mathbb{Z}^{k}$. If two loosely row equivalent matrices have the same number
of rows, then the matrices are strictly row equivalent. (The last assertion
follows from properties of the Smith normal form of matrices with entries in
$\mathbb{Z}$, cf. \cite[Chapter 3]{J} for instance.)

\begin{definition}
\label{congdef}Two matrices $B$ and $B^{\prime}$ are \emph{congruent} over
$\mathbb{Z}$ if and only if $B^{\prime}=UBU^{T}$, where $U$ is unimodular and
$U^{T}$ is the transpose of $U$.
\end{definition}

\begin{definition}
\label{incidence}Let $\vec{G}$ denote an arbitrary directed version of $G$.
Then the \emph{incidence matrix} $N(\vec{G})$ is the $n\times m$ matrix whose
entries are given by the following.
\[
N(\vec{G})_{ve}=%
\begin{cases}
-1\text{,} & \text{if }v\text{ is the initial vertex of }e\text{, and }e \text{ is not a loop}\\
1\text{,} & \text{if }v\text{ is the terminal vertex of }e\text{, and }e \text{ is not a loop}\\
0\text{,} & \text{if }e\text{ is not incident on }v\text{, or }e\text{ is a
loop}%
\end{cases}
\]

\end{definition}

\begin{definition}
\label{reduce}Let $V_{0}$ be a subset of $V(G)$, which contains one vertex
from each connected component of $G$. Then the submatrix of $N(\vec{G})$
obtained by removing all rows corresponding to elements of $V_{0}$ is a
\emph{reduced} incidence matrix of $G$, denoted $N_{V_{0}}(\vec{G})$.
\end{definition}

The following equalities are immediate.

\textbf{Property VII} If $V_{0}\subseteq V(G)$ contains one vertex from each
connected component of $G$, then%
\[
N(\vec{G})\cdot N(\vec{G})^{T}=L(G)\text{ and }N_{V_{0}}(\vec{G})\cdot
N_{V_{0}}(\vec{G})^{T}=L_{V_{0}}(G)\text{.}%
\]

If $V_{0}^{\prime}$ is another such subset of $V(G)$ then $N_{V_{0}^{\prime}%
}(\vec{G})$ can be obtained from $N_{V_{0}}(\vec{G})$ as follows. For every
connected component of $G$ where $V_{0}$ contains a vertex $v$ and
$V_{0}^{\prime}$ contains a vertex $v^{\prime}\neq v$, (a) add all the other
rows of $N_{V_{0}}(\vec{G})$ corresponding to vertices from this connected
component to the $v^{\prime}$ row, (b) multiply the new row by $-1$, and (c)
label the new row with $v$ rather than $v^{\prime}$. If $U$ is the product of
elementary matrices corresponding to the row operations mentioned in (a) and
(b), then
\begin{equation}
U\cdot N_{V_{0}}(\vec{G})=N_{V_{0}^{\prime}}(\vec{G})\text{ and hence }U\cdot
L_{V_{0}}(G)\cdot U^{T}=L_{V_{0}^{\prime}}(G). \label{redcong}%
\end{equation}
We deduce the following elementary properties of the reduced matrices. Let
$V_{0}$ and $V_{0}^{\prime}$ be two subsets of $V(G)$, each of which contains precisely one vertex from each connected component of $G$.

\textbf{Property VIII} $N_{V_{0}}(\vec{G})$ and $N_{V_{0}^{\prime}}(\vec{G})$
are strictly row equivalent over $\mathbb{Z}$.

\textbf{Property IX} $L_{V_{0}}(G)$ and $L_{V_{0}^{\prime}}(G)$ are congruent
over $\mathbb{Z}.$

Notice that $\vec{G}$ appears in property VIII, while $G$ appears in property IX. The difference is that changing the direction of an edge $e$ does not affect $L_{V_{0}}(G)$, but it multiplies the $e$ column of $N_{V_{0}}(\vec{G})$ by $-1$.

Formula (\ref{redcong}) makes it clear that the row equivalence class of
$N_{V_{0}}(\vec{G})$ determines the congruence class of $L_{V_{0}}(G)$. A
natural question is this: does the congruence class of $L_{V_{0}}(G)$ also
determine the row equivalence class of $N_{V_{0}}(\vec{G})$? Property X tells
us that the answer is \textquotedblleft yes.\textquotedblright

\textbf{Property X} Let $G_{1}$ and $G_{2}$ be graphs with the same number of
loops, and for $i\in\{1,2\}$ let $V_{0i}\subseteq V(G_{i})$ be a subset that
contains one vertex from each connected component of each graph. Then any of
the following conditions implies the others.

\begin{enumerate}
\item $L_{V_{01}}(G_{1})$ and $L_{V_{02}}(G_{2})$ are congruent over
$\mathbb{Z}$.

\item There are oriented versions $\vec{G}_{1},\vec{G}_{2}$ and a bijection
$\beta:E(G_{1})\rightarrow E(G_{2})$ such that $N_{V_{01}}(\vec{G}_{1})$ and
$N_{V_{02}}(\vec{G}_{2})$ are strictly row equivalent over $\mathbb{Z}$, when
their columns are matched by $\beta.$

\item There are oriented versions $\vec{G}_{1},\vec{G}_{2}$ and a bijection
$\beta:E(G_{1})\rightarrow E(G_{2})$ such that $N(\vec{G}_{1})$ and $N(\vec
{G}_{2})$ are loosely row equivalent over $\mathbb{Z}$, when their columns are
matched by $\beta.$

\item $G_{1}$ and $G_{2}$ are 2-isomorphic. (I.e., their cycle matroids are isomorphic.)
\end{enumerate}

The equivalence among conditions 2, 3
and 4 of property X is a famous theorem of Whitney \cite{W1},
and there are many expositions in the literature. For instance, a thorough
discussion is provided by Oxley \cite[Chapter 5]{O}. Note that the phrase ``over $\mathbb{Z}$'' is not important in conditions 2--4; these conditions remain equivalent if
$\mathbb{Z}$ is replaced by a field. In fact most textbook presentations of
the theory of incidence matrices, like those in \cite[Chapter 2]{B},
\cite[Chapter 8]{GR} and \cite[Chapter 5]{O}, are formally restricted to
fields; however the presentations are easily modified to work over
$\mathbb{Z}$.

The fact that condition 1 of property X is equivalent to the other conditions is due to Watkins \cite{Wa1, Wa2}; this is Theorem \ref{Watkins} of the introduction. It is important to realize that ``over $\mathbb{Z}$'' is crucial in condition 1. In fact, condition 1 is not equivalent to the other conditions for any nontrivial graph over any field. For if $F$ is a field, $G$ is a nontrivial
graph, $a>1$ is an integer not divisible by the characteristic of $F$ and $a^{2}G$ is the graph obtained by replacing each edge of $G$ with $a^{2}$
parallel edges, then $G$ and $a^{2}G$ are certainly not 2-isomorphic. However%
\[
L_{V_{0}}(a^{2}G)=a^{2}L_{V_{0}}(G)=(aI)L_{V_{0}}(G)(aI)=(aI)L_{V_{0}}(G)(aI)^{T}
\]
and as $aI$ is invertible over $F$, it follows that $L_{V_{0}}(a^{2}G)$ is congruent to $L_{V_{0}}(G)$ over $F$. More details
of property X, including a proof of the equivalence of condition 1 with
conditions 2--4, are discussed in Section \ref{sec:center}.

There are several equivalent ways to describe 2-isomorphisms. Two of them are stated in Definition~\ref{twoiso}. We refer to Oxley \cite{O} for other equivalent descriptions, and a thorough account of their properties.

\begin{definition}
\label{twoiso}Let $G_{1}$ and $G_{2}$ be graphs. Then a bijection $\beta:E(G_{1})\rightarrow E(G_{2})$ is a \emph{2-isomorphism} if it defines an isomorphism between the cycle matroids of $G_1$ and $G_2$. That is, $\beta$ satisfies the following equivalent conditions.
\begin{enumerate}
    \item A subset $S \subseteq E(G_1)$ is the edge set of a maximal forest of $G_1$ if and only if $\beta(S)$ is the edge set of a maximal forest of $G_2$.
    \item There are oriented
versions $\vec{G}_{1}$ and$\ \vec{G}_{2}$ such that vectors in $\mathbb{Z}%
^{E(G_{1})}$ corresponding to circuits of $G_{1}$ are matched by $\beta$ to
vectors in $\mathbb{Z}^{E(G_{2})}$ corresponding to circuits of $G_{2}$.
\end{enumerate}
\end{definition}

Recall that a circuit in a graph is a minimal closed path. The vector
corresponding to a circuit is obtained by following the circuit according to
one of the two orientations, and placing $\pm1$ in the $e$ coordinate of the
vector for each edge $e$ that appears on the circuit, with $+1$ (resp. $-1$)
representing agreement (resp. disagreement) between the $\vec{G}$ direction of
$e$ and the direction of $e$ on the circuit. The subgroup of $\mathbb{Z}^{E(G)}$ generated by these vectors is called the \emph{cycle group} of $G$, or the \emph{lattice of integral flows} of $G$.

The last property we discuss concerns the relationship between Laplacian matrices and maximal forests.

If $M$ is a maximal forest of $G$ and $\vec{M}$ inherits edge directions from
$\vec{G}$ then the matrix-tree theorem tells us that $N_{V_{0}}(\vec{M})$ is a
unimodular submatrix of $N_{V_{0}}(\vec{G})$, which includes the columns
corresponding to edges of $M$. For convenience we adopt a notational
shorthand: if $\vec{G}-E(M)$ is the directed graph obtained from $\vec{G}$ by
removing all the edges of $M$, then we define
\[
C(M):=N_{V_{0}}(\vec{M})^{-1}\cdot N_{V_{0}}(\vec{G}-E(M)).
\]
This useful matrix appears in several references \cite{BHN, GR, O, SW}, but it
does not seem to have a standard name. We use the letter $C$ because the rows
represent the fundamental cuts of $G$ with respect to $M$. More information
about $C(M)$ is given in Sections \ref{sec:dlap} and \ref{sec:dlapgen}.

If $\ell=m$ or $G$ is a forest then $C(M)$ is the empty $0\times0$ matrix; otherwise,
$C(M)$ is an $(n-c(G))\times(m-n+c(G))$ matrix. Of course the $C(M)$ notation
is incomplete, as it does not mention $\vec{G}$ or $V_{0}$. Notice also that%
\[
N_{V_{0}}(\vec{G})=N_{V_{0}}(\vec{M})\cdot%
\begin{pmatrix}
I & C(M)
\end{pmatrix}
\cdot P_{M}\text{,}%
\]
where $I$ is an identity matrix of order $n-c(G)$ and $P_{M}$ is a permutation
matrix that permutes the columns of $%
\begin{pmatrix}
I & C(M)
\end{pmatrix}
$ into the order of $E(G)$ used for the columns of $N_{V_{0}}(\vec{G})$.
Permutation matrices satisfy $P_{M}P_{M}^{T}=I$, so
\[
L_{V_{0}}(G)=N_{V_{0}}(\vec{G})\cdot N_{V_{0}}(\vec{G})^{T}=N_{V_{0}}(\vec
{M})\cdot\left(  I+C(M)C(M)^{T}\right)  \cdot N_{V_{0}}(\vec{M})^{T}\text{.}%
\]
If $G=M$ is a forest then $C(M)$ is empty, and the equation holds with $I+C(M) C(M)^T = I$. If $m=\ell$, on the other hand, then the equation holds vacuously -- all the matrices are empty.

We deduce the following.

\textbf{Property XI} The congruence class of $I+C(M)C(M)^{T}$ over
$\mathbb{Z}$ is the same as that of $L_{V_{0}}(G)$.

Property XI\ tells us that we may think of the reduced forms of properties I -- X as applying to $I+C(M)C(M)^{T}$ matrices rather than $L_{V_{0}}(G)$
matrices, up to congruence over $\mathbb{Z}$. For instance the equivalence
between conditions 4 and 1 of property X may be rephrased like this: $G_{1}$
and $G_{2}$ are 2-isomorphic if and only if there are oriented versions
$\vec{G}_{i}$, maximal forests $M_{i}$, and subsets $V_{0i}\subseteq V(G_{i})$
containing one vertex from each connected component of each graph, such that
$I+C(M_{1})C(M_{1})^{T}$ and $I+C(M_{2})C(M_{2})^{T}$ are congruent over
$\mathbb{Z}$.

\section{An example}
\label{sec:exam}

Before discussing dual Laplacian matrices, we consider a small example. 

Suppose $G$ has two vertices and three parallel non-loop
edges. A maximal forest $M$ of $G$ consists of one edge. Depending on the edge directions, $C(M)$ is $%
\begin{pmatrix}
1 & 1
\end{pmatrix}
$, $%
\begin{pmatrix}
1 & -1
\end{pmatrix}
$, $%
\begin{pmatrix}
-1 & 1
\end{pmatrix}
$ or $%
\begin{pmatrix}
-1 & -1
\end{pmatrix}
$. In every case, $I + C(M) C(M)^T$ is the $1 \times 1$ matrix whose only entry is $3$; this is the same as $L_{V_0}(G)$.

Foreshadowing the results of the next sections, notice that if $I'$ is the $2 \times 2$ identity matrix then $I'+C(M)^T C(M)$ is one of these two matrices.
\[%
\begin{pmatrix}
2 & -1 \\
-1 & 2
\end{pmatrix}
\qquad \qquad
\begin{pmatrix}
2 & 1 \\
1 & 2 
\end{pmatrix}
\]
The first matrix is the reduced Laplacian of $K_{3}$, the dual graph of $G$. The trace of the unreduced Laplacian $L(K_3)$ is $6$. In contrast, the second matrix is not a reduced Laplacian of any graph. Moreover, if we enlarge this matrix to a $3 \times 3$ matrix whose rows and columns sum to $\bf 0$ we get a matrix of trace $10$,
\[
\begin{pmatrix}
2 & 1 & -3 \\
1 & 2 & -3 \\
-3 & -3 & 6
\end{pmatrix}
\text{.}
\]

\section{Dual Laplacian matrices}
\label{sec:dlap}

Here is another famous definition of Whitney \cite{W2}; again, we refer to
Oxley \cite{O} for a thorough discussion.

\begin{definition}
\label{abduals}
Let $G_{1}$ and $G_{2}$ be graphs. Then $G_{1}$ and $G_{2}$ are \emph{abstract duals} if and only if there is a bijection $\beta:E(G_{1})\rightarrow E(G_{2})$ that defines an isomorphism between the cycle matroid of $G_1$ and the bond matroid of $G_2$. That is, $\beta$ satisfies the following equivalent conditions.
\begin{enumerate}
    \item A subset $S \subseteq E(G_1)$ is the edge set of a maximal forest of $G_1$ if and only if $\beta(S)$ is the complement of the edge set of a maximal forest of $G_2$.
    \item There are
oriented versions $\vec{G}_{1}$ and$\ \vec{G}_{2}$ such that vectors in
$\mathbb{Z}^{E(G_{1})}$ corresponding to circuits of $G_{1}$ are matched by
$\beta$ to vectors in $\mathbb{Z}^{E(G_{2})}$ corresponding to edge cuts of
$G_{2}$.
\end{enumerate}
\end{definition}

If $W$ is a proper subset of $V(G)$ then the vector corresponding to the
edge cut determined by $W$ is obtained by placing $\pm1$ in the $e$ coordinate
of the vector for each non-loop edge $e$ that is incident on just one vertex
of $W$, with $+1$ (resp. $-1$) representing an edge directed toward $W$ (resp.
away from $W$) in $\vec{G}$. The subgroup of $\mathbb{Z}^{E(G)}$ generated by these vectors is called the \emph{cut group} of $G$, or the \emph{lattice of integral cuts} of $G$.

It is easy to see from Definitions \ref{twoiso} and \ref{abduals} that there
is a strong connection between 2-isomorphism and abstract duality: if $G_{1}$
and $G_{2}$ are abstract duals, then every graph 2-isomorphic to $G_{1}$ is an
abstract dual of every graph 2-isomorphic to $G_{2}$. It is not so easy to see
another famous theorem of Whitney \cite{W2}: $G$ has an abstract dual if and
only if $G$ is planar.

It turns out that if $I^{\prime}$ is an identity matrix of order $m-n+c(G)$,
then almost all of the fundamental properties of Laplacians listed in Section \ref{sec:lap} have analogues for matrices of the form $I^{\prime}+C(M)^{T}C(M)$. 

%{Most of these properties are easy enough to prove directly, but some may also be deduced from the observation of Godsil and Royle \cite[Chapter 14]{GR} that $I^{\prime }+C(M)^{T}C(M)$ is a Gram matrix for the lattice of integral flows of $G$. (We say more about this observation in the next section.) Our choice of terminology for these matrices is suggested by the duality between cuts and flows, and the idea of setting up a theory analogous to the one just summarized.

\begin{definition}
\label{lapdual}If $G$ is a graph with a maximal forest $M$ then any matrix
congruent over $\mathbb{Z}$ to $I^{\prime}+C(M)^{T}C(M)$ is a \emph{reduced
dual Laplacian }matrix of $G$.
\end{definition}

When $C(M)$ is the empty matrix -- i.e., when $m = \ell$ or $G$ is a forest -- the matrix $I^{\prime}+C(M)^{T}C(M)$ of Definition \ref{lapmatu} should be interpreted as being equal to $I'$. It is empty if $G$ is a forest.

\begin{definition}
\label{unredlapdual}If $G$ is a graph then an \emph{unreduced} dual Laplacian
matrix of $G$ is obtained by adjoining a row and column to a reduced dual
Laplacian matrix of $G$, so that the rows and columns of the
resulting matrix sum to $\mathbf{0}$.
\end{definition}

Notice that compared to Definitions \ref{lapmatu} and \ref{reducel},
Definitions \ref{lapdual} and \ref{unredlapdual} are \textquotedblleft
backward\textquotedblright: we start with reduced dual Laplacian matrices,
and construct unreduced dual Laplacian matrices by enlarging the reduced ones.
To make sure there is no misunderstanding we should emphasize that dual
Laplacians do not require dual graphs: every graph has reduced and unreduced
dual Laplacian matrices, whether the graph is planar or nonplanar. If $G=M$ is
a forest, the only reduced dual Laplacian matrix of $G$ is the empty
$0\times0$ matrix, and the only unreduced dual Laplacian matrix of $G$ is the
$1\times1$ matrix $\mathbf{0}$. Otherwise the reduced dual Laplacian matrices of $G$
are symmetric $(m-n+c(G))\times(m-n+c(G))$ matrices.

In general we use $^{\ast}$ to indicate dual Laplacian matrices and their
properties. For instance $L^{\ast}_{V_{0}}(G)$ denotes a reduced dual
Laplacian of $G$ obtained using $V_{0}$, and $L^{\ast}(G)$ denotes an
unreduced dual Laplacian matrix of $G$. It is important to keep in mind that
unlike $L_{V_{0}}(G)$ and $L(G)$, the notations $L_{V_{0}}^{\ast}(G),L^{\ast
}(G)$ are not well defined. In consequence there is no
property III$^{\ast}$. However, we will see in Section \ref{sec:dlapgen} that
these matrices satisfy the following property.

\textbf{Property IX}$^{\ast}$ $L_{V_{0}}^{\ast}(G)$ and $L^{\ast}(G)$ are well
defined up to congruence over $\mathbb{Z}$.{}

That is, the reduced dual Laplacian matrices of $G$ are all congruent over $\mathbb{Z}$, and the unreduced dual Laplacian matrices of $G$ are all congruent over $\mathbb{Z}$.

Here are some other properties of dual Laplacian matrices.

\textbf{Property I}$^{\ast}$ $L^{\ast}(G)$ and $L_{V_{0}}^{\ast}(G)$\ are
symmetric matrices with integer entries.

\textbf{Property II}$^{\ast}$ $L^{\ast}(G)$ and $L_{V_{0}}^{\ast}(G)$ are not
changed if bridges are added to $G$ or removed from $G$.

\textbf{Property IV}$^{\ast}$ The sum of the columns of $L^{\ast}(G)$ is $\mathbf{0}$;
and the same for the rows. The reduced version of property IV$^{\ast}$ is that
reduced dual Laplacian matrices satisfy the matrix-tree theorem, just as
reduced Laplacian matrices do. That is, $\det(I^{\prime}+C(M)^{T}C(M)) $ is the number of maximal forests of $G$ \cite[Theorem 14.7.3]{GR}.

Property II$^{\ast}$ implies that the dual version of property VI is rather
different from the original:

\textbf{Property VI}$^{\ast}$ If $G$ is not connected then any connected graph
obtained by adding bridges to $G$ has the same $L^{\ast}$ and $L_{V_{0}%
}^{\ast}$ matrices as $G$.

Before stating a property VII$^{\ast}$, it is helpful to discuss $C(M)$ a
little more. Recall that $C(M)=N_{V_{0}}(\vec{M})^{-1}\cdot N_{V_{0}}(\vec
{G}-E(M))$. The rows of $C(M)$ correspond to the rows of $N_{V_{0}}(\vec
{M})^{-1}$, which are indexed by the same set that indexes the columns of
$N_{V_{0}}(\vec{M})$, i.e., $E(M)$. The columns of $C(M)$ correspond to the
columns of $N_{V_{0}}(\vec{G}-E(M))$, which are indexed by the edges of
$G-E(M)$.

Now, consider the matrix $%
\begin{pmatrix}
C(M)^{T} & -I^{\prime}%
\end{pmatrix}
$. The columns of $I^{\prime}$ inherit an indexing from the rows of $C(M)^{T}%
$, which are the columns of $C(M)$; so the columns of $I^{\prime}$ are indexed
by the edges of $G-E(M)$. Of course the columns of $C(M)^{T}$ are the rows of
$C(M)$, and as was just discussed they are indexed by $E(M)$. All in all,
then, the columns of $%
\begin{pmatrix}
C(M)^{T} & -I^{\prime}%
\end{pmatrix}
$ are indexed by the edges of $G$. We define $F(M)$ to be the matrix
\[
F(M):=%
\begin{pmatrix}
C(M)^{T} & -I^{\prime}%
\end{pmatrix}
\cdot P_{M}\text{,}%
\]
where $P_{M}$ is a permutation matrix that permutes the columns of 
\[
\begin{pmatrix}
C(M)^{T} & -I^{\prime}
\end{pmatrix}
\]
into the order of $E(G)$ used for the columns of $N_{V_{0}}(\vec{G})$, as
before. N.b.\ Like $C(M)$, the notation $F(M)$ is incomplete; it does not
mention either $\vec{G}$ or $V_{0}$.

It turns out that $F(M)$ plays a role dual to that of $N_{V_{0}}(\vec{G})$. We
give more details in Section \ref{sec:dlapgen}, but we can certainly see the following.

\textbf{Property VII}$^{\ast}$ If $\widehat{F}(M)$ is the matrix obtained from
$F(M)$ by adjoining a new row equal to the negative of the sum of the rows of
$F(M)$, then%
\[
F(M)F(M)^{T}=I^{\prime}+C(M)^{T}C(M)=L_{V_{0}}^{\ast}(G)\text{ \ and
\ }\widehat{F}(M)\widehat{F}(M)^{T}=L^{\ast}(G)\text{.}%
\]

Recall that properties VIII and IX differ in that the former involves $\vec{G}$ and the latter involves $G$. In Section \ref{sec:dlapgen} we see that there is an analogous difference between properties VIII$^{\ast}$ and IX$^{\ast}$.

\textbf{Property VIII}$^{\ast}$ For a fixed choice of edge directions in
$\vec{G}$, the $F(M)$ matrices that arise from different choices of $M$ and
$V_{0}$ are all strictly row equivalent to each other over $\mathbb{Z}$.

Before proceeding we take a moment to describe the effect on $F(M)$ of changing the direction of an edge $e$, while holding $M$ and $V_0$ fixed. (a) If $e \notin E(M)$, then reversing the direction of $e$ has the effect of multiplying the $e$ column of $N_{V_{0}}(\vec
{G}-E(M))$ by $-1$. This in turn has the effect of multiplying the $e$ column of $C(M)=N_{V_{0}}(\vec{M})^{-1}\cdot N_{V_{0}}(\vec
{G}-E(M))$ by $-1$. The effect on
$
F(M)=%
\begin{pmatrix}
C(M)^{T} & -I^{\prime}%
\end{pmatrix}
\cdot P_{M}%
$
is to multiply the $e$ row of $C(M)^T$ by $-1$ while leaving the $-I'$ block of $F(M)$ unchanged. Of course multiplying \emph{part} of a row by $-1$ is not an elementary row operation. (b) If $e \in E(M)$, then reversing the direction of $e$ has the effect of multiplying the $e$ column of $N_{V_{0}}(\vec{M})$ by $-1$. This in turn has the effect of multiplying the $e$ row of $C(M)=N_{V_{0}}(\vec{M})^{-1}\cdot N_{V_{0}}(\vec
{G}-E(M))$ by $-1$. The effect on $F(M)$ is to multiply the $e$ column of $C(M)^T$ by $-1$. Again, this effect is not an elementary row operation. These observations explain why property VIII$^{\ast}$ requires a fixed choice of edge directions.

On the other hand, Property IX$^{\ast}$ does not require a fixed choice of edge directions. The preceding paragraph gives two reasons for this. (a) If $e \notin E(M)$ then multiplying the $e$ column of $C(M)$ by $-1$ has the effect of replacing $C(M)$ with $C(M)U$, where $U$ is the elementary matrix corresponding to the column multiplication. As $U^TU=I'$, the effect on $I^{\prime}+C(M)^{T}C(M)$ is to replace it with
\[
I^{\prime}+U^TC(M)^{T}C(M)U=U^T \cdot (I^{\prime}+C(M)^{T}C(M)) \cdot U\text{,}
\]
which is congruent to $I^{\prime}+C(M)^{T}C(M)$ over $\mathbb{Z}$. (b) If $e \in E(M)$ then multiplying the $e$ row of $C(M)$ by $-1$ has no effect on $I^{\prime}+C(M)^{T}C(M)$.

So far, we have stated properties I$^{\ast}$, II$^{\ast}$, IV$^{\ast}$,
VI$^{\ast}$, VII$^{\ast}$, VIII$^{\ast}$ and IX$^{\ast}$. There is no property
III$^{\ast}$, and property XI$^{\ast}$ is Definition \ref{lapdual}. The two remaining dual properties, V$^{\ast}$ and X$^{\ast}$, are Theorems \ref{trace} and \ref{duals}, stated in the introduction.

\section{Properties II$^{\ast}$, VIII$^{\ast}$ and IX$^{\ast}$%
\label{sec:dlapgen}}

Let $\vec{G}$ be a directed version of a graph $G$, and $M$ a maximal forest
of $G$. Let $I^{\prime}$ be the identity matrix of order $m-n+c(G)$, and let
$F(M)$ be the matrix
\[
F(M)=%
\begin{pmatrix}
C(M)^{T} & -I^{\prime}%
\end{pmatrix}
\cdot P_{M}%
\]
mentioned above. Then $F(M)F(M)^{T}=I^{\prime}+C(M)^{T}C(M)$ is a reduced dual
Laplacian matrix of $G$.\ Also
\begin{align*}
F(M)\cdot N_{V_{0}}(\vec{G})^{T}  &  =%
\begin{pmatrix}
C(M)^{T} & -I^{\prime}%
\end{pmatrix}
\cdot P_{M}P_{M}^{T}\cdot%
\begin{pmatrix}
I\\
C(M)^{T}%
\end{pmatrix}
\cdot N_{V_{0}}(\vec{M})^{T}\\
&  =\left(  C(M)^{T}-C(M)^{T}\right)  \cdot N_{V_{0}}(\vec{M})^{T}=\mathbf{0}\text{,}%
\end{align*}
so each row of $F(M)$ is orthogonal to all the rows of $N_{V_{0}}(\vec{G})$.

Notice that $F(M)$ is an $(m-n+c(G)) \times m$ matrix and $N_{V_{0}}(\vec{G})$ is an $(n-c(G)) \times m$ matrix. Both matrices have linearly independent rows, so it follows that the row spaces of these two matrices are orthogonal complements in the vector space $\mathbb{Q}^m$. Because of the $I$ and $-I'$ blocks of
\[
N_{V_{0}}(\vec{M})^{-1} \cdot N_{V_{0}}(\vec{G})\cdot P_{M}^{-1}=%
\begin{pmatrix}
I & C(M)
\end{pmatrix}
\text{  and  }%
F(M)\cdot P_{M}^{-1}=%
\begin{pmatrix}
C(M)^{T} & -I^{\prime}%
\end{pmatrix}
\text{,}%
\]
it is easy to deduce that the groups generated by the rows of $F(M)$ and $N_{V_{0}}(\vec{G})$ are orthogonal complements in the free
abelian group $\mathbb{Z}^{E(G)}$. That is, the rows of $F(M)$ generate the cycle group (also called the lattice of integral flows) of $G$. The fact
that the rows of $F(M)$ represent a basis of the cycle group implies directly
that $F(M)F(M)^{T}$ is a Gram matrix for the lattice of integral flows of $G$,
as mentioned by Godsil and Royle \cite[Chapter 14]{GR}.

Each row of $F(M)$ has precisely one nonzero entry from $I^{\prime}$, so each row of $F(M)$ corresponds to a circuit of $G$ that includes precisely one edge outside $M$. That is, the rows of $F(M)$ represent the \emph{fundamental circuits} of $G$ with respect to $M$. The observation of the preceding paragraph -- that the cycle group of $G$ is generated by the fundamental circuits with respect to $M$, for every maximal forest $M$ -- is a well-known elementary property of the fundamental circuits. In textbooks of graph theory or matroid theory like
\cite{B} or \cite{O}, this elementary property of fundamental circuits is often
stated only for cycle spaces defined over fields; but as noted above it is easy to deduce the integral version, because of the $I$ and $-I'$ blocks in the matrices. The statement over
$\mathbb{Z}$ is more common in textbooks of algebraic topology, like \cite{M}; it is also discussed by Bacher, de la Harpe and Nagnibeda \cite[Lemma 2]{BHN}.

The same cycle group is generated by the rows of $F(M)$, independent of the choices of $M$ and $V_0$. We deduce property VIII$^{\ast}$: All of the $F(M)$ matrices associated with $\vec{G}$ are strictly row equivalent over $\mathbb{Z}$.

That is, if $M$ and $M^{\prime}$ are maximal forests of $G$ then
$UF(M)=F(M^{\prime})$ for some unimodular matrix $U$. It follows that%
\begin{align}
U\left(  I^{\prime}+C(M)^{T}C(M)\right)  U^{T}  &  =UF(M)F(M)^{T}%
U^{T}\label{dcong}\\
&  =F(M^{\prime})F(M^{\prime})^{T}=I^{\prime}+C(M^{\prime})^{T}C(M^{\prime
})\text{,}\nonumber
\end{align}
so $I^{\prime}+C(M)^{T}C(M)$ and $I^{\prime}+C(M^{\prime})^{T}C(M^{\prime})$
are congruent over $\mathbb{Z}$. We conclude that all the reduced dual Laplacian matrices of $\vec{G}$ provided by Definition \ref{lapdual} are congruent
to each other over $\mathbb{Z}$. As discussed at the end of Section \ref{sec:dlap}, changing edge directions does not affect reduced dual Laplacian matrices, up to congruence; it follows that $\vec G$ may be replaced by $G$ in the preceding sentence. This is the reduced form of property IX$^{\ast}$.

For the unreduced form of property IX$^{\ast}$, notice that (\ref{dcong})
implies that if $L^{\ast}(G)$ and $L^{\prime\ast}(G)$ are the matrices
obtained from $I^{\prime}+C(M)^{T}C(M)$ and $I^{\prime}+C(M^{\prime}%
)^{T}C(M^{\prime})$ (respectively) by adjoining a new row and column so that
the row and column sums are $\mathbf{0}$, then%
\[
L^{\prime\ast}(G) =%
\begin{pmatrix}
1 & -\mathbf{1}\\
\mathbf{0} & I^{\prime}%
\end{pmatrix}%
\begin{pmatrix}
0 & \mathbf{0}\\
\mathbf{0} & I^{\prime}+C(M^{\prime})^{T}C(M^{\prime})
\end{pmatrix}%
\begin{pmatrix}
1 & \mathbf{0}\\
-\mathbf{1} & I^{\prime}%
\end{pmatrix}
\]
\[
 =
\begin{pmatrix}
1 & -\mathbf{1}\\
\mathbf{0} & I^{\prime}%
\end{pmatrix}%
\begin{pmatrix}
1 & \mathbf{0}\\
\mathbf{0} & U
\end{pmatrix}%
\begin{pmatrix}
0 & \mathbf{0}\\
\mathbf{0} & I^{\prime}+C(M)^{T}C(M)
\end{pmatrix}%
\begin{pmatrix}
1 & \mathbf{0}\\
\mathbf{0} & U^{T}%
\end{pmatrix}%
\begin{pmatrix}
1 & \mathbf{0}\\
-\mathbf{1} & I^{\prime}%
\end{pmatrix}
\]
\[
 =
\begin{pmatrix}
1 & -\mathbf{1}\\
\mathbf{0} & I^{\prime}%
\end{pmatrix}%
\begin{pmatrix}
1 & \mathbf{0}\\
\mathbf{0} & U
\end{pmatrix}%
\begin{pmatrix}
1 & \mathbf{1}\\
\mathbf{0} & I^{\prime}%
\end{pmatrix}
L^{\ast}(G)%
\begin{pmatrix}
1 & \mathbf{0}\\
\mathbf{1} & I^{\prime}%
\end{pmatrix}%
\begin{pmatrix}
1 & \mathbf{0}\\
\mathbf{0} & U^{T}%
\end{pmatrix}%
\begin{pmatrix}
1 & \mathbf{0}\\
-\mathbf{1} & I^{\prime}%
\end{pmatrix}
\text{.}%
\]

If $M$ is a maximal forest of $G$ and $e$ is a bridge of $G$ then $e\in
E(M)$ and $e$ does not appear in any circuit of $G$, so every entry of
the $e$ column of $F(M)$ is $0$. It follows that $F(M)F(M)^{T}$ is exactly the
same as the reduced dual Laplacian matrix $F(M-e)F(M-e)^{T}$ of $G-e$. This is
property II$^{\ast}$.

As mentioned above, the rows of $F(M)$ correspond to fundamental circuits with
respect to $M$. Circuit-cutset duality is reflected in the fact that the rows
of $%
\begin{pmatrix}
I & C(M)
\end{pmatrix}
$ correspond to fundamental cuts with respect to $M$, and this fact implies that
$%
\begin{pmatrix}
I & C(M)
\end{pmatrix}
\cdot
\begin{pmatrix}
I & C(M)
\end{pmatrix}^T=I+C(M)C(M)^{T}
$
is a Gram matrix for the lattice of integral cuts of $G$.
See \cite[Lemma 2]{BHN}
or \cite[Theorem 14.2.4]{GR} for a detailed discussion.

%%%%%
%%%%%

\section{Theorems \ref{Watkins}, \ref{trace} and \ref{duals}}
\label{sec:center}

The following matrix result will be useful. The argument is adapted from \cite{Wa1}.

\begin{lemma}
\label{center}Let $A$ be an $n\times m$ integer matrix, and let $a$ be the number of nonzero
columns in $A$. Then either of these two conditions implies the other.

\begin{enumerate}
\item There are a directed graph $\vec{G}$ and a unimodular matrix $C$ such
that $CA=N(\vec{G})$.

\item There are a symmetric integer matrix $B$ and a unimodular matrix $C$
such that $B=CAA^{T}C^{T}$, the row sum of $B$ is $\mathbf{0}$, and $Tr(B)\leq2a$.
\end{enumerate}

If $C$ satisfies one condition then $C$ also satisfies the other condition.
Moreover, every matrix $B$ in condition 2 has $Tr(B)=2a$.
\end{lemma}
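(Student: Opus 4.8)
The plan is to work throughout with the single matrix $M := CA$, so that condition 1 reads ``$M = N(\vec{G})$ for some directed graph $\vec{G}$'' and condition 2 reads ``$B := MM^{T}$ is symmetric integral, has zero row sums, and satisfies $Tr(B) \leq 2a$.'' Since $C$ is unimodular it is invertible, so $A$ and $M = CA$ have the same zero columns; hence $M$ has exactly $a$ nonzero columns. The closing assertion that a single $C$ serves both conditions then becomes automatic, because in each implication I introduce no new unimodular matrix: I merely reinterpret the given product $M = CA$.

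For $1 \Rightarrow 2$ I would put $B = MM^{T} = N(\vec{G})N(\vec{G})^{T} = L(G)$ by Property VII. Property I gives that $B$ is symmetric and integral, Property IV gives that its row sums are $\mathbf{0}$, and Property V gives $Tr(B) = 2(m-\ell)$, where $\ell$ is the number of loops of $G$. Because the nonzero columns of $N(\vec{G})$ are exactly the non-loop edges, the bijection furnished by $C$ yields $a = m-\ell$, so $Tr(B) = 2a \leq 2a$. This simultaneously verifies condition 2 and, in passing, the ``moreover'' equality for every $B$ arising in this direction.

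For $2 \Rightarrow 1$, together with the ``moreover'' claim, I would extract two facts about $M$ from condition 2. First, the zero row sums of $B = MM^{T}$ force $\mathbf{1}^{T}B\mathbf{1} = \|M^{T}\mathbf{1}\|^{2} = 0$, hence $M^{T}\mathbf{1} = \mathbf{0}$: every column of $M$ sums to $0$. Second, $Tr(B) = \sum_{i}B_{ii} = \sum_{i,e}M_{ie}^{2}$ is the sum, over all columns, of the squared Euclidean norms of the columns of $M$. The key elementary observation is that a nonzero integer vector whose entries sum to $0$ must have at least one entry $\geq 1$ and one entry $\leq -1$, so its squared norm is at least $2$, with equality exactly when it has a single $+1$, a single $-1$, and all other entries $0$. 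Summing over the $a$ nonzero columns gives $Tr(B) \geq 2a$; combined with the hypothesis $Tr(B) \leq 2a$ this forces $Tr(B) = 2a$ (which is the ``moreover'' statement) and forces every nonzero column of $M$ to have exactly one $+1$ and one $-1$. Such an $M$ is precisely the incidence matrix $N(\vec{G})$ of the directed graph $\vec{G}$ on the $n$ rows of $M$, whose non-loop edges are read off from the nonzero columns (the $+1$ marking the terminal vertex, the $-1$ the initial vertex) and whose loops correspond to the zero columns; comparison with Definition \ref{incidence} confirms that the signs match.

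I expect the only real content to lie in the sum-of-squares step of the second implication: recognizing that the trace bound is exactly a bound on total squared column norm, and that the zero-column-sum property (itself squeezed out of the row-sum hypothesis) makes $2$ the unavoidable minimum per nonzero column. Once that lower bound is matched by the hypothesis $Tr(B) \leq 2a$, both the equality $Tr(B) = 2a$ and the rigid ``one $+1$, one $-1$'' column structure follow at once, and the directed graph $\vec{G}$ is just a bookkeeping device for the resulting $M$.
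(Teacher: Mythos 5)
Your proposal is correct and follows essentially the same route as the paper's own proof: the forward direction identifies $B$ with $L(G)$ and invokes Properties IV and V, and the reverse direction squeezes $\mathbf{1}\cdot CA=\mathbf{0}$ out of the zero row sums via $\|\mathbf{1}\cdot CA\|^{2}=\mathbf{1}B\mathbf{1}^{T}=0$, then uses the trace-as-sum-of-squared-entries count to force each nonzero column of $CA$ to consist of exactly one $+1$ and one $-1$. The only cosmetic difference is that you phrase the counting argument column-by-column (minimum squared norm $2$ per nonzero column) while the paper phrases it entrywise, but the content is identical.
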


\begin{proof}
For the implication $1\implies2$, suppose $C$ is unimodular and $N(\vec
{G})=CA$. Then the number $a$ of nonzero columns of $A$ is the same as the
number $m-\ell$ of nonzero columns of $N(\vec{G})$. The matrix $B=CAA^{T}%
C^{T}=L(G)$ has row sum $\mathbf{0}$ and trace $Tr(B)=2a=2(m-\ell)$ by properties IV
and V of unreduced Laplacian matrices.

For $2\implies1$, suppose $C$ is unimodular and $B=CAA^{T}C^{T}$ has row sum
$\mathbf{0}$ and trace $Tr(B)\leq2a$. If $\mathbf{1}$ denotes a vector whose
entries are all $1$ then $\mathbf{1}\cdot B=\mathbf{0}$, because the rows of $B$ sum to
$\mathbf{0}$. Hence $0=\mathbf{1}\cdot B\cdot\mathbf{1=1}\cdot CAA^{T}C^{T}%
\cdot\mathbf{1}=(\mathbf{1}\cdot CA)\cdot(\mathbf{1}\cdot CA)^{T}%
\mathbf{=}\left\Vert \mathbf{1}\cdot CA\right\Vert ^{2}$, so $\mathbf{1}\cdot
CA=\mathbf{0}$. That is, the rows of $CA$ sum to $\mathbf{0}$. It follows that each nonzero
column of $CA$ has at least one positive entry and at least one negative entry.

As $C$ is nonsingular, $A$ and $CA$ both have $a$ nonzero columns. The trace
$Tr(B)=Tr(CA\cdot(CA)^{T})$ is the sum of the squares of the entries of $CA$,
so since every nonzero column of $CA$ has at least two nonzero entries,
\[
Tr(B)=\sum_{i,j}(CA)_{ij}^{2}{}\geq2a\text{,}%
\]
with equality only if every nonzero column of $CA$ has exactly two nonzero
entries, both of absolute value $1$.

The hypothesis $Tr(B)\leq2a$ implies that the equality $Tr(B)=2a$ holds. The
rows of $CA$ sum to $\mathbf{0}$, so it follows that every nonzero column of $CA$ has
exactly two nonzero entries, $+1$ and $-1$. That is, $CA$ is the incidence
matrix of a directed graph.
\end{proof}

\subsection{Proof of Theorem \ref{Watkins}}

Recall that property X includes Theorem \ref{Watkins}. As discussed in Section \ref{sec:lap}, the equivalence of conditions 2, 3 and
4 of property X is well known. The implication $2\implies1$ follows
immediately from property VII. For $1\implies2$, suppose $G_{1}$ and $G_{2}$
are graphs each of which has $\ell$ loops, and suppose $U$ is a unimodular
matrix with $UL_{V_{01}}(G_{1})U^{T}=L_{V_{02}}(G_{2})$. We may assume without
loss of generality that $\left\vert E(G_{2})\right\vert =m_{2}\leq
m_{1}=\left\vert E(G_{1})\right\vert $.

Let $G_{1}^{\prime}$ be the connected graph obtained from $G_{1}$ by
identifying all the vertices of $V_{01}$ to a single vertex $v_{1}$, and let
$G_{2}^{\prime}$ be the connected graph obtained from $G_{2}$ by identifying
all the vertices of $V_{02}$ to a single vertex $w_{1}$. Let $V_{01}^{\prime
}=\{v_{1}\}$ and $V_{02}^{\prime}=\{w_{1}\}$. Then $L_{V_{01}}(G_{1}%
)=L_{V_{01}^{\prime}}(G_{1}^{\prime})$ and $L_{V_{02}}(G_{2})=L_{V_{02}%
^{\prime}}(G_{2}^{\prime})$, so $UL_{V_{01}^{\prime}}(G_{1}^{\prime}%
)U^{T}=L_{V_{02}^{\prime}}(G_{2}^{\prime})$. Let%
\[
W=%
\begin{pmatrix}
1 & \mathbf{-1}\\
\mathbf{0} & I
\end{pmatrix}
\text{, \ \ }X=%
\begin{pmatrix}
1 & \mathbf{0}\\
\mathbf{0} & U
\end{pmatrix}
\text{ \ \ and \ \ }Y=%
\begin{pmatrix}
1 & \mathbf{1}\\
\mathbf{0} & I
\end{pmatrix}
\text{,}%
\]
where $I$ is an identity matrix. Let $Z=WXY$, and order the vertices of
$V(G_{1}^{\prime})$ and $V(G_{2}^{\prime})$ with $v_{1}$ and $w_{1}$ first
(respectively). Then
\[
ZL(G_{1}^{\prime})Z^{T}=WX%
\begin{pmatrix}
0 & \mathbf{0}\\
\mathbf{0} & L_{V_{01}^{\prime}}(G_{1})
\end{pmatrix}
X^{T}W^{T}=W%
\begin{pmatrix}
0 & \mathbf{0}\\
\mathbf{0} & L_{V_{02}^{\prime}}(G_{2})
\end{pmatrix}
W^{T}\text{,}%
\]
which is $L(G_{2}^{\prime})$. Let $B=L(G_{2}^{\prime})=ZN(\vec{G}_{1}^{\prime})N(\vec{G}_{1}^{\prime})^{T}Z^{T}%
$. Properties IV and V of Laplacian matrices tell us that the row sum of $B$
is $\mathbf{0}$ and $Tr(B)=2(m_{2}-\ell)\leq2(m_{1}-\ell)$, which is twice the number
of nonzero columns of $N(\vec{G}_{1}^{\prime})$. Applying Lemma \ref{center}
with $A=N(\vec{G}_{1}^{\prime})$ and $C=Z$, we conclude that there is a
directed graph $\vec{G}_{3}$ such that $ZN(\vec{G}_{1}^{\prime})=N(\vec{G}%
_{3})$. Moreover,%
\[
L(G_{3})=N(\vec{G}_{3})N(\vec{G}_{3})^{T}=ZN(\vec{G}_{1}^{\prime})\left(
ZN(\vec{G}_{1}^{\prime})\right)  ^{T}=ZL(G_{1}^{\prime})Z^{T}=L(G_{2}^{\prime
})\text{,}%
\]
so property III tells us that $G_{3}$ is isomorphic to
$G_{2}^{\prime}$, except possibly for the placement of loops. Loop placement
does not affect incidence matrices, so we conclude that $ZN(\vec{G}%
_{1}^{\prime})=N(\vec{G}_{2}^{\prime})$, i.e., the unreduced incidence
matrices of $\vec{G}_{1}^{\prime}$ and $\vec{G}_{2}^{\prime}$ are strictly row
equivalent over $\mathbb{Z}$. It follows that $N_{V_{01}^{\prime}}(\vec{G}%
_{1}^{\prime})$ and $N_{V_{02}^{\prime}}(\vec{G}_{2}^{\prime})$ are also
strictly row equivalent over $\mathbb{Z}$; these are the same matrices as
$N_{V_{01}}(\vec G_{1})$ and $N_{V_{02}}(\vec G_{2})$.

%%%%%

\subsection{The unreduced version of Theorem \ref{Watkins}}

The unreduced version of Theorem \ref{Watkins} is not so different from the reduced version, but we provide details for the sake of completeness.

\begin{lemma}
\label{ranklem}
The rank of $L(G)$ over $\mathbb{Q}$ is $n-c(G)$.
\end{lemma}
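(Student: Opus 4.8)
The plan is to bracket the rank from above and below using the structural properties already established. For the upper bound, Property IV tells us that the rows of $L(G)$ sum to $\mathbf{0}$. By Property VI, $L(G)$ is block diagonal with one block $L(C_i)$ for each connected component $C_i$ of $G$, and Property IV applies within each block, since each $C_i$ is itself a graph: the rows of $L(C_i)$ also sum to $\mathbf{0}$. Hence each block, of order $n_i = \left\vert V(C_i)\right\vert$, has rank at most $n_i - 1$, and summing over the $c(G)$ blocks gives $\operatorname{rank} L(G) \le \sum_i (n_i - 1) = n - c(G)$.

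For the lower bound, I would invoke the matrix-tree theorem as stated after Definition \ref{reducel}: for any $V_0$ containing precisely one vertex from each connected component, $\det L_{V_0}(G)$ equals the number of maximal forests of $G$. Every graph has at least one maximal forest, so this determinant is a positive integer, and in particular nonzero. Thus the reduced Laplacian $L_{V_0}(G)$ --- an $(n - c(G)) \times (n - c(G))$ principal submatrix of $L(G)$ --- is nonsingular over $\mathbb{Q}$, which forces $\operatorname{rank} L(G) \ge n - c(G)$. Combining the two bounds yields $\operatorname{rank} L(G) = n - c(G)$.

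The steps are routine, and I do not expect any real obstacle beyond citing the matrix-tree theorem correctly and observing that the reduced Laplacian is exactly an $(n-c(G))$-square submatrix. One alternative I would keep in mind is to argue through Property VII, using the identity $\operatorname{rank}\!\left(N(\vec{G}) N(\vec{G})^{T}\right) = \operatorname{rank} N(\vec{G})$ over $\mathbb{Q}$ --- which holds because $\ker(N N^{T}) = \ker(N^{T})$ for a real matrix $N$, as $N N^{T} x = 0$ implies $\left\Vert N^{T} x\right\Vert^{2} = 0$ --- together with the standard fact that an incidence matrix has rank $n - c(G)$. The matrix-tree route is preferable here, however, since it is more self-contained given what is available in the excerpt.
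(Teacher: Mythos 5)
Your proof is correct and takes essentially the same approach as the paper's: the upper bound from Properties IV and VI, and the lower bound from the matrix-tree theorem showing that the reduced Laplacian is a nonsingular $(n-c(G))\times(n-c(G))$ submatrix of $L(G)$. The additional details you supply (positivity of the forest count, and the alternative route via Property VII) are sound but not needed beyond what the paper already records.
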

\begin{proof}Properties IV and VI tell us that the rank of $L(G)$ over $\mathbb{Q}$ is no more than $n-c(G)$. The matrix-tree theorem tells us that a reduced Laplacian of $G$ is a nonsingular submatrix of $L(G)$, of order $n-c(G)$.
\end{proof} 

\begin{lemma}
\label{reducelem}
If $V_0$ includes one vertex from each connected component of $G$ then $L(G)$ is congruent over $\mathbb{Z}$ to the matrix
\[
\begin{pmatrix}
L_{V_0}(G) & \mathbf{0}\\
\mathbf{0} & \mathbf{0}
\end{pmatrix}
\text{.}
\]
\end{lemma}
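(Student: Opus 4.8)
The plan is to exhibit a single integer unimodular matrix $U$ that realizes the congruence directly, mimicking the block manipulations already used in the proof of property X. First I would order the vertices of $G$ so that the vertices of $V(G)\setminus V_{0}$ come first and the vertices of $V_{0}$ come last, and partition $L(G)$ accordingly as
\[
L(G)=\begin{pmatrix} P & Q \\ Q^{T} & R\end{pmatrix}\text{,}
\]
where the $P$ block is indexed by $V(G)\setminus V_{0}$ — so that $P=L_{V_{0}}(G)$ — and the $R$ block is indexed by $V_{0}$. Property I guarantees that $L(G)$ is symmetric, so the two off-diagonal blocks really are transposes of one another. The goal is then to produce a unimodular $U$ with $U L(G) U^{T}=\begin{pmatrix} P & \mathbf{0} \\ \mathbf{0} & \mathbf{0}\end{pmatrix}$.

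Next I would define $S$ to be the $\lvert V_{0}\rvert\times\lvert V(G)\setminus V_{0}\rvert$ zero-one matrix with $S_{ij}=1$ exactly when the vertex $j\notin V_{0}$ lies in the same connected component of $G$ as the vertex $i\in V_{0}$, and set
\[
U=\begin{pmatrix} I & \mathbf{0} \\ S & I\end{pmatrix}\text{.}
\]
This $U$ is block lower triangular with integer entries and $\det U=1$, hence unimodular. Left multiplication by $U$ performs, within each component, the operation of adding every non-$V_{0}$ row of that component to the $V_{0}$ row of that component; carrying out the product gives bottom block rows $(SP+Q^{T},\ SQ+R)$. The crux of the argument is to show these vanish. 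By Property VI the matrix $L(G)$ is block diagonal across the components, and by Property IV restricted to a single component the rows of $L(G)$ indexed by a given component sum to $\mathbf{0}$. Hence for each $i\in V_{0}$ the sum of the non-$V_{0}$ rows of its component is exactly the negative of row $i$, so $SP+Q^{T}=\mathbf{0}$ and $SQ+R=\mathbf{0}$, giving $UL(G)=\begin{pmatrix} P & Q \\ \mathbf{0} & \mathbf{0}\end{pmatrix}$.

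Finally I would right-multiply by $U^{T}=\begin{pmatrix} I & S^{T} \\ \mathbf{0} & I\end{pmatrix}$, which applies the transposed column operations. Since $P$ is symmetric, transposing $SP+Q^{T}=\mathbf{0}$ yields $PS^{T}+Q=\mathbf{0}$, and this is precisely the top-right block of $U L(G) U^{T}$, which therefore vanishes as well. The result is $U L(G) U^{T}=\begin{pmatrix} L_{V_{0}}(G) & \mathbf{0} \\ \mathbf{0} & \mathbf{0}\end{pmatrix}$, establishing the congruence over $\mathbb{Z}$. The one point I expect to require genuine care, rather than mere assertion, is that the single family of row operations encoded by $S$ simultaneously kills both $Q^{T}$ and $R$ in the bottom rows; this is exactly where the full per-component row-sum property (Property IV together with the block-diagonal decomposition of Property VI) is needed, and it is also what guarantees that the transposed column operations then clear the remaining off-diagonal block.
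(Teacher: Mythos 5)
Your proof is correct and is essentially the paper's own argument: your block matrix $U=\begin{pmatrix} I & \mathbf{0}\\ S & I\end{pmatrix}$ is exactly the paper's unimodular matrix (the identity with $1$'s added in entries $(v,w)$ for $v\in V_0$ and $w\neq v$ in the same component), and both proofs rely on Properties IV and VI in the same way. You have simply written out explicitly the block computation that the paper leaves to the reader.
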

\begin{proof}Properties IV and VI tell us that the displayed matrix is $UL(G)U^T$, where $U$ is obtained from an identity matrix by changing the $vw$ entry to $1$ whenever $v \in V_0$, $v \neq w$ and $v,w$ lie in the same connected component of $G$.
\end{proof}

\begin{corollary}
\label{xcor}
Suppose the unreduced Laplacian matrices of $G_{1}$ and $G_{2}$ are congruent over $\mathbb{Z}$. Then the reduced Laplacian matrices of $G_{1}$ and $G_{2}$ are congruent over $\mathbb{Z}$.
\end{corollary}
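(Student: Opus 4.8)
The plan is to combine the two preceding lemmas with a block-matrix argument. First I would invoke Lemma~\ref{reducelem} to replace each $L(G_i)$ by its block form $\begin{pmatrix} L_{V_{0i}}(G_i) & \mathbf{0}\\ \mathbf{0} & \mathbf{0}\end{pmatrix}$, which is congruent to $L(G_i)$ over $\mathbb{Z}$. Since the hypothesis says $L(G_1)$ and $L(G_2)$ are congruent, transitivity of congruence gives that these two block matrices, say $\widetilde{L}_1$ and $\widetilde{L}_2$, are congruent over $\mathbb{Z}$; write $\widetilde{L}_2 = U\widetilde{L}_1 U^T$ with $U$ unimodular. Because congruent matrices automatically have the same size and the same rank over $\mathbb{Q}$, Lemma~\ref{ranklem} shows that the nonzero blocks $L_{V_{01}}(G_1)$ and $L_{V_{02}}(G_2)$ share a common order $r = n_1 - c(G_1) = n_2 - c(G_2)$; and by the matrix-tree theorem (the number of maximal forests is positive) both are nonsingular.

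Next I would partition $U = \begin{pmatrix} U_{11} & U_{12}\\ U_{21} & U_{22}\end{pmatrix}$ conformally, with $U_{11}$ of order $r$. Writing $P = L_{V_{01}}(G_1)$ and $Q = L_{V_{02}}(G_2)$ and multiplying out $U\widetilde{L}_1 U^T$ block by block, the equation $\widetilde{L}_2 = U\widetilde{L}_1 U^T$ yields the identities $Q = U_{11} P U_{11}^T$ and $U_{21} P U_{11}^T = \mathbf{0}$ (along with two further identities I would not need). Taking determinants in $Q = U_{11} P U_{11}^T$ gives $(\det U_{11})^2 \det P = \det Q \neq 0$, so $U_{11}$ is nonsingular over $\mathbb{Q}$; then the relation $U_{21} P U_{11}^T = \mathbf{0}$ forces $U_{21} = \mathbf{0}$, because both $P$ and $U_{11}^T$ are invertible.

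With $U_{21} = \mathbf{0}$ the unimodular matrix $U$ is block upper triangular, so $\pm 1 = \det U = \det U_{11} \cdot \det U_{22}$; as both factors are integers, each equals $\pm 1$, and in particular $U_{11}$ is unimodular. The identity $Q = U_{11} P U_{11}^T$ then exhibits $L_{V_{01}}(G_1)$ and $L_{V_{02}}(G_2)$ as congruent over $\mathbb{Z}$, which is the assertion. I expect the main obstacle to be the passage from congruence of the zero-padded matrices to congruence of the nonsingular blocks themselves: everything hinges on the nonsingularity of the reduced Laplacians, which is exactly what makes the off-diagonal block $U_{21}$ vanish and converts the determinant condition on the full matrix $U$ into one on $U_{11}$ alone.
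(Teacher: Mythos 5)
Your proposal is correct and follows essentially the same route as the paper's proof: pass to the zero-padded block forms via Lemma~\ref{reducelem}, match block sizes via Lemma~\ref{ranklem} and equality of ranks, then use nonsingularity of the reduced Laplacians to force the off-diagonal block of $U$ to vanish, so that $\det U = \det U_{11}\det U_{22}$ makes $U_{11}$ unimodular. The only cosmetic difference is that you establish nonsingularity of $U_{11}$ by a determinant computation where the paper argues directly from nonsingularity of the congruent product; the substance is identical.
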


\begin{proof} As $L(G_{1})$ and $L(G_{2})$ are congruent over $\mathbb{Z}$, Lemma~\ref{reducelem} tells us that
\[
A_1=
\begin{pmatrix}
L_{V_{01}}(G_1) & \mathbf{0}\\
\mathbf{0} & \mathbf{0}
\end{pmatrix}
\text{\ \  and \ \ }
A_2=
\begin{pmatrix}
L_{V_{02}}(G_2) & \mathbf{0}\\
\mathbf{0} & \mathbf{0}
\end{pmatrix}
\]
are also congruent over $\mathbb{Z}$. Hence there is a unimodular matrix $U$ with $UA_1 U^T=A_2$. Also, the fact that $L(G_1)$ and $L(G_2)$ are congruent implies that they have the same rank; so according to Lemma~\ref{ranklem}, $L_{V_{01}}(G_1)$ and $L_{V_{02}}(G_2)$ have the same size. It follows that
\[
UA_1U^T=
\begin{pmatrix}
U_1 & U_2\\
U_3 & U_4
\end{pmatrix}
\begin{pmatrix}
L_{V_{01}}(G_1) & \mathbf{0}\\
\mathbf{0} & \mathbf{0}
\end{pmatrix}
\begin{pmatrix}
U_{1}^T & U_{3}^T\\
U_{2}^T & U_{4}^T
\end{pmatrix}
=A_2
\]
requires $U_1 L_{V_{01}}(G_1) U_{1}^T=L_{V_{02}}(G_2)$ and $U_1 L_{V_{01}}(G_1) U_{3}^T=\mathbf{0}$.

As $L_{V_{01}}(G_1)$ and $L_{V_{02}}(G_2)$ are both nonsingular, $U_1 L_{V_{01}}(G_1) U_{1}^T=L_{V_{02}}(G_2)$ implies that $U_1$ is nonsingular too. Then $U_1 L_{V_{01}}(G_1) U_{3}^T=\mathbf{0}$ implies that $U_{3}=\mathbf{0}$, so $\det(U)=\det(U_1)\det(U_4)$. Necessarily then $U_1$ is unimodular, so $U_1 L_{V_{01}}(G_1) U_{1}^T=L_{V_{02}}(G_2)$ implies that $L_{V_{01}}(G_1)$ and $L_{V_{02}}(G_2)$ are congruent over $\mathbb{Z}$.
\end{proof}

Here is the unreduced version of Theorem \ref{Watkins}.
\begin{proposition}Let $G_{1}$ and $G_{2}$ be graphs with the same number of loops. Then $L(G_{1})$ and $L(G_{2})$ are congruent over $\mathbb{Z}$ if and only if $G_{1}$ and $G_{2}$ are 2-isomorphic graphs with the same number of vertices and the same number of connected components.
\end{proposition}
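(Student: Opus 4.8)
The plan is to reduce everything to the already-established reduced form (property X), passing between $L(G_i)$ and its reduced Laplacians by means of Lemma~\ref{reducelem}. I would treat the two directions separately.

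For the forward implication, I would assume $L(G_2)=UL(G_1)U^T$ for a unimodular $U$. Since $U$ is square, congruent matrices have equal size, which forces $n_1=n_2$; and since $U$ is invertible over $\mathbb{Q}$, congruence preserves rank over $\mathbb{Q}$, so Lemma~\ref{ranklem} gives $n_1-c(G_1)=n_2-c(G_2)$ and hence $c(G_1)=c(G_2)$. This already yields the ``same number of vertices'' and ``same number of connected components'' conclusions. To obtain 2-isomorphism, I would apply Corollary~\ref{xcor} to pass from congruence of the unreduced Laplacians to congruence of the reduced Laplacians $L_{V_{01}}(G_1)$ and $L_{V_{02}}(G_2)$, and then invoke the implication $1\Rightarrow4$ of property X. Property X is applicable because $G_1$ and $G_2$ have the same number of loops by hypothesis.

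For the reverse implication, I would begin from 2-isomorphism together with the equal loop counts and apply $4\Rightarrow1$ of property X to produce a unimodular $U_0$ with $U_0L_{V_{01}}(G_1)U_0^T=L_{V_{02}}(G_2)$. By Lemma~\ref{reducelem}, each $L(G_i)$ is congruent over $\mathbb{Z}$ to the block matrix $\begin{pmatrix}L_{V_{0i}}(G_i)&\mathbf{0}\\\mathbf{0}&\mathbf{0}\end{pmatrix}$, in which the reduced block has order $n_i-c(G_i)$ and the zero block has order $c(G_i)$. The hypotheses $n_1=n_2$ and $c(G_1)=c(G_2)$ guarantee that these two block decompositions have matching block sizes, so conjugating the first block matrix by $\begin{pmatrix}U_0&\mathbf{0}\\\mathbf{0}&I\end{pmatrix}$ carries it to the second. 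Chaining these congruences using transitivity and symmetry of $\mathbb{Z}$-congruence then gives $L(G_1)\cong_{\mathbb{Z}}L(G_2)$.

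The main subtlety is not any single computation but the essential role played by the two auxiliary hypotheses in the reverse direction. Two-isomorphism by itself permits Whitney's vertex-identification and vertex-cleaving moves, which alter both the vertex count and the component count; consequently two 2-isomorphic graphs can have congruent (hence equal-order) reduced Laplacians while their unreduced Laplacians differ in size. The conditions $n_1=n_2$ and $c(G_1)=c(G_2)$ are precisely what is needed to align the zero blocks supplied by Lemma~\ref{reducelem}, and this block-size matching is the step I would carry out most carefully.
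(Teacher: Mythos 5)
Your proposal is correct and follows essentially the same route as the paper's own proof: the forward direction via size and rank (Lemma~\ref{ranklem}) to get equal vertex and component counts, then Corollary~\ref{xcor} plus the reduced property X for 2-isomorphism; the reverse direction via the reduced property X, the block matrices of Lemma~\ref{reducelem} whose sizes match precisely because of the vertex and component hypotheses, and transitivity of congruence. The only cosmetic difference is that you write out the block-diagonal conjugating matrix explicitly where the paper simply asserts the congruence of the two block matrices.
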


\begin{proof}If $L(G_{1})$ and $L(G_{2})$ are congruent over $\mathbb{Z}$, then they certainly have the same rank and size. It follows that $G_{1}$ and $G_{2}$ have the same values for $n-c(G)$ and $n$, so $G_{1}$ and $G_{2}$ have the same number of vertices and the same number of connected components. Corollary~\ref{xcor} and Theorem \ref{Watkins} tell us that $G_{1}$ and $G_{2}$ are 2-isomorphic.

For the converse, suppose $G_{1}$ and $G_{2}$ are 2-isomorphic graphs with the same number of vertices and the same number of connected components. Then the matrices
\[
A_1=
\begin{pmatrix}
L_{V_{01}}(G_1) & \mathbf{0}\\
\mathbf{0} & \mathbf{0}
\end{pmatrix}
\text{\ \  and \ \ }
A_2=
\begin{pmatrix}
L_{V_{02}}(G_2) & \mathbf{0}\\
\mathbf{0} & \mathbf{0}
\end{pmatrix}
\]
have the same size. Theorem \ref{Watkins} tells us that $L_{V_{01}}(G_1)$ and $L_{V_{02}}(G_2)$ are congruent over $\mathbb{Z}$, so $A_1$ and $A_2$ are congruent over $\mathbb{Z}$. According to Lemma~\ref{reducelem}, it follows that $L(G_1)$ and $L(G_2)$ are congruent over $\mathbb{Z}$.
\end{proof}

\subsection{Proof of Theorem \ref{trace}}
\label{subsec}

Let $G$ be a graph with $m$ edges and $b$ bridges. Recall that Theorem \ref{trace} has two parts. 1. If $L^{\ast}(G)$ is an unreduced dual Laplacian
matrix of $G$ then $Tr(L^{\ast}(G))$ is an even integer $\geq2(m-b)$. 2. $G$
is planar if and only if $G$ has an unreduced dual Laplacian matrix with
$Tr(L^{\ast}(G))=2(m-b)$.

It is easy to verify that $Tr(L^{\ast}(G))$ is an even integer. The row sum of $L^{\ast}(G)$ is $\mathbf{0}$, so the sum of the entries of $L^{\ast}(G)$ is $0$. It follows that $-Tr(L^{\ast}(G))$ is the sum of the off-diagonal entries of $L^{\ast}(G)$; this sum is even because $L^{\ast}(G)$ is symmetric.

It is also easy to verify one direction of part 2. If $G$ is planar then $G$ has an
abstract dual $G^{\ast}$, and Theorem \ref{duals} tells us that $L(G^{\ast})$
is an unreduced dual Laplacian matrix of $G$. (Theorem \ref{duals} is proven
below; there is no circularity because the proof does not involve Theorem \ref{trace}.) As $G^{\ast}$ has $m$ edges and $b$ loops, property V guarantees
that $Tr(L(G^{\ast}))=2(m-b)$.

We verify part 1 and the other direction of part 2 simultaneously, by proving
that if $L^{\ast}(G)$ is an unreduced dual Laplacian matrix of $G$ with
$Tr(L^{\ast}(G))\leq2(m-b)$ then $Tr(L^{\ast}(G))=2(m-b)$ and $G$ is planar.

Suppose that $G$ is a graph with an unreduced dual Laplacian matrix $L^{\ast
}(G)$ such that $Tr(L^{\ast}(G))\leq2(m-b)$. According to Definitions
\ref{lapdual} and \ref{unredlapdual}, $G$ has a maximal forest $M$ such that
$L^{\ast}(G)$ is obtained from a matrix congruent to $I^{\prime}+C(M)^{T}C(M)$
by adjoining a row and column to make the row and column sums equal to $\mathbf{0}$.
Let
\[
F(M)=%
\begin{pmatrix}
C(M)^{T} & -I^{\prime}%
\end{pmatrix}
\cdot P_{M}%
\]
as in Section \ref{sec:dlap}, and let $A$ be the matrix obtained from $F(M)$
by adjoining a new first row with all entries equal to $0$. Then the number of
nonzero columns of $A$ is the same as the number of nonzero columns of $F(M)$,
and according to the discussion in\ Section \ref{sec:dlapgen} this is the
number of edges of $G$ that appear in circuits of $G$. That is, $A$ has
$a=m-b$ nonzero columns.

Suppose $U$ is unimodular and $L^{\ast}(G)$ is obtained by adjoining a row and column to $U(I^{\prime
}+C(M)^{T}C(M))U^{T}$, in such a way that the row and column
sums equal $\mathbf{0}$. Then we have%
\begin{gather*}
L^{\ast}(G)=%
\begin{pmatrix}
1 & -\mathbf{1}\\
\mathbf{0} & I^{\prime}%
\end{pmatrix}%
\begin{pmatrix}
0 & \mathbf{0}\\
\mathbf{0} & U(I^{\prime}+C(M)^{T}C(M))U^{T}%
\end{pmatrix}%
\begin{pmatrix}
1 & \mathbf{0}\\
-\mathbf{1} & I^{\prime}%
\end{pmatrix}
\\
=ZAA^{T}Z^{T}\text{, }\text{where \ }Z=%
\begin{pmatrix}
1 & -\mathbf{1}\\
\mathbf{0} & I^{\prime}%
\end{pmatrix}%
\begin{pmatrix}
1 & \mathbf{0}\\
\mathbf{0} & U
\end{pmatrix}
=%
\begin{pmatrix}
1 & -\mathbf{1}\cdot U\\
\mathbf{0} & U
\end{pmatrix}
\text{.}%
\end{gather*}
Then $A$, $B=L^{\ast}(G)$ and $C=Z$ satisfy part 2 of Lemma \ref{center}, so
the lemma guarantees that $Tr(L^{\ast}(G))=2a=2(m-b)$ and there is a directed
graph $\vec{G}^{\ast}$ such that $ZA=N(\vec{G}^{\ast})$. The group generated
by the rows of $N(\vec{G}^{\ast})$ is the group of cuts of $G^{\ast}$, and as
noted at the beginning of Section \ref{sec:dlapgen}, the group generated by
the rows of $F(M)$ is the group of cycles of $G$. The equation $ZA=N(\vec
{G}^{\ast})$ implies that these two groups are the same, so if $\beta
:E(G)\rightarrow E(G^{\ast})$ is the bijection that matches edges according to
the correspondence between columns of $A$ and $N(\vec{G}^{\ast})$, then cuts
of $G^{\ast}$ correspond to cycles of $G$ under $\beta$. That is, $G$ and
$G^{\ast}$ are abstract duals; hence both are planar.

%%%%%

\subsection{Theorem \ref{trace} and MacLane's criterion}

The planarity criterion of MacLane \cite{Ma} is this: $G$ is planar if and
only if there is a $GF(2)$ basis for its cycle space, in which each edge
appears no more than twice. If we augment such a basis with one more element,
equal (modulo 2) to the sum of the basis elements, then the resulting set has
the property that every non-bridge edge appears precisely twice.

In one direction, the relationship with Theorem \ref{trace} is simple.  If $L^{\ast}(G)$ is an unreduced dual Laplacian matrix of $G$, then there is a $\mathbb{Z}$ basis $B$ of the group of cycles
of $G$, such that $L^{\ast}(G)$ records the dot products among the vectors in
the set $B^{\prime}$ obtained by augmenting $B$ with one more element, equal
to the negative of the sum of the elements of $B$. Notice that every
non-bridge edge of $G$ is represented at least once among the elements of
$B$, and at least twice among the elements of $B^{\prime}$. Each diagonal
entry of $L^{\ast}(G)$ is a positive integer, at least as large as the number
of edges represented in the corresponding element of $B^{\prime}$. (A diagonal
entry will be larger than the number of edges represented in the corresponding
element of $B^{\prime}$ if the absolute value of some coordinate of that
element is more than $1$.) It follows that $Tr(L^{\ast}(G))\geq2(m-b)$, with
equality only if each non-bridge edge is represented in precisely two
elements of $B^{\prime}$. Clearly then $Tr(L^{\ast}(G))=2(m-b)$ implies that
$B$ satisfies MacLane's criterion.

The opposite direction is not so immediate, as MacLane's criterion provides
only a $GF(2)$ basis, not a $\mathbb{Z}$ basis. Of course if $G$ satisfies
MacLane's criterion then $G$ is planar, and it is easy to verify Theorem \ref{trace} for planar graphs, as indicated in Subsection \ref{subsec}.

\subsection{Proof of Theorem \ref{duals}}

Let $G$ be a planar graph. Theorem \ref{duals} asserts that these two
statements about a graph $G^{\ast}$ are equivalent. 1. $G$ and $G^{\ast}$ are
abstract duals. 2. The number of loops in $G^{\ast}$ is the same as the number of bridges
in $G$, and a reduced Laplacian matrix of $G^{\ast}$ is a reduced dual Laplacian matrix of $G$.

If $G$ and $G^{\ast}$ are abstract duals then there are oriented versions
$\vec{G},\vec{G}^{\ast}$ and a bijection $\beta:E(G)\rightarrow E(G^{\ast})$
under which the cycle vectors of $G$ correspond to the cut vectors of
$G^{\ast}$. If we match the columns of $F(M)$ and $N_{V_{0}^{\ast}}(\vec
{G}^{\ast})$ according to $\beta$, then the rows of $F(M)$ and $N_{V_{0}%
^{\ast}}(\vec{G}^{\ast})$ generate the same group. Recall that $F(M)$ has $m-n+c(G)$ rows by definition, and the number of rows in $N_{V_{0}^{\ast}}(\vec{G}^{\ast})$ is the same as the number of edges in a maximal forest of $G^{\ast}$. As $G$ and $G^{\ast}$ are abstract duals, the number of edges in a maximal forest of $G^{\ast}$ is $m-\left\vert E(M) \right\vert=m-(n-c(G))$, the same as the number of rows in $F(M)$. It follows that $F(M)$ and
$N_{V_{0}^{\ast}}(\vec{G}^{\ast})$ are strictly row equivalent over
$\mathbb{Z}$, so there is a unimodular $U$ with $N_{V_{0}^{\ast}}(\vec
{G}^{\ast})=UF(M)$. Then $L_{V_{0}^{\ast}}(G^{\ast})=N_{V_{0}^{\ast}}(\vec
{G}^{\ast})N_{V_{0}^{\ast}}(\vec{G}^{\ast})^{T}=UF(M)F(M)^{T}U^{T}$ is a
reduced dual Laplacian matrix of $G$. Also, the number of loops in $G^{\ast}$
is the number of $\mathbf{0}$ columns of $N_{V_{0}^{\ast}}(\vec{G}^{\ast})$, and the
number of bridges in $G$ is the number of $\mathbf{0}$ columns of $F(M)$; if the
matrices are row equivalent these numbers must be equal. This verifies the
implication $1\implies2$.

Suppose condition 2 holds. As $G$ is planar, it has an abstract dual $D$.
Applying the implication $1\implies2$ to $D$ in place of $G^{\ast}$, we
conclude that the number of loops in $G^{\ast}$ is the same as the number of
loops in $D$, and both $L_{V_{0}^{\ast}}(G^{\ast})$ and a reduced Laplacian of
$D$ are reduced dual Laplacians of $G$. But then
$L_{V_{0}^{\ast}}(G^{\ast})$ and a reduced Laplacian of $D$ are congruent to
each other over $\mathbb{Z}$, so Theorem \ref{Watkins} tells us that $G^{\ast}$ and $D$
are 2-isomorphic. As $D$ is an abstract dual of $G$, so is $G^{\ast}$. 
 
\section{Two examples}
\label{sec:examples}

\begin{figure}[bht]
\centering

\begin{tikzpicture} [scale=1]
\filldraw
(0,0) circle (2pt)
(0,2) circle (2pt)
(1.7,1) circle (2pt)
(3.7,1) circle (2pt)
(5.4,2) circle (2pt)
(5.4,0) circle (2pt)
(7.1,1) circle (2pt);
\node at (0,-0.3) {$v_1$};
\node at (0,2.3) {$v_2$};
\node at (1.8,0.7) {$v_3$};
\node at (3.6,0.7) {$v_4$};
\node at (5.4,-0.3) {$v_5$};
\node at (5.4,2.3) {$v_6$};
\node at (7.43,1) {$v_7$};

\begin{scope}[decoration={
    markings,
    mark=at position 0.53 with {\arrow{>}}}
    ]
    \draw [postaction={decorate}, very thick] (0,0)--(0,2);
    \draw [postaction={decorate}, very thick] (0,0)--(1.7,1);
    \draw [postaction={decorate}, semithick] (0,2)--(1.7,1);
    \draw [postaction={decorate}, very thick] (1.7,1)--(3.7,1);
    \draw [postaction={decorate}, very thick] (3.7,1)--(5.4,0);
    \draw [postaction={decorate}, very thick] (3.7,1)--(5.4,2);
    \draw [postaction={decorate}, semithick] (5.4,0)--(5.4,2);
    \draw [postaction={decorate}, very thick] (5.4,0)--(7.1,1);
    \draw [postaction={decorate}, semithick] (5.4,2)--(7.1,1);
\end{scope}
\node at (-0.3,1) {$e_1$};
\node at (0.95,0.2) {$e_2$};
\node at (0.95,1.75) {$e_3$};
\node at (2.7,1.3) {$e_4$};
\node at (4.5,0.2) {$e_5$};
\node at (4.5,1.8) {$e_6$};
\node at (5.1,1) {$e_7$};
\node at (6.35,0.2) {$e_8$};
\node at (6.35,1.75) {$e_9$};

\end{tikzpicture}

\caption{The graph $G$ in Example 1.}%
\label{laplanf1}%
\end{figure}
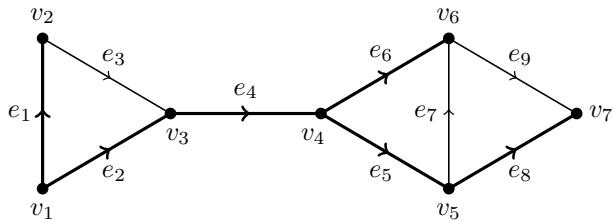

\textbf{Example 1} Suppose $G$ is the graph pictured in Figure \ref{laplanf1},
with bold edges indicating the spanning tree $M$ with $E(M)=\{e_{1}%
,e_{2},e_{4},e_{5},e_{6},e_{8}\}$. Using the indicated edge directions and
$V_{0}=\{v_{3}\}$, we obtain%
\[
\bordermatrix{
& e_1 & e_2 & e_3 & e_4 & e_5 & e_6 & e_7 & e_8 & e_9 \cr
v_1 & -1 & -1 & 0 & 0 & 0 & 0 & 0 & 0 & 0 \cr
v_2 & 1 & 0 & -1 & 0 & 0 & 0 & 0 & 0 & 0 \cr
v_3 & 0 & 1 & 1 & -1 & 0 & 0 & 0 & 0 & 0 \cr
v_4 & 0 & 0 & 0 & 1 & -1 & -1 & 0 & 0 & 0 \cr
v_5 & 0 & 0 & 0 & 0 & 1 & 0 & -1 & -1 & 0 \cr
v_6 & 0 & 0 & 0 & 0 & 0 & 1 & 1 & 0 & -1 \cr
v_7 & 0 & 0 & 0 & 0 & 0 & 0 & 0 & 1 & 1 \cr
}=N(\vec{G})\text{,}%
\]%
\[
\text{so \ \ }%
\begin{pmatrix}
-1 & 1 & -1 & 0 & 0 & 0 & 0 & 0 & 0\\
0 & 0 & 0 & 0 & -1 & 1 & -1 & 0 & 0\\
0 & 0 & 0 & 0 & 1 & -1 & 0 & 1 & -1
\end{pmatrix}
=F(M)
\]
\[
\text{ \ and \ \ }%
\begin{pmatrix}
6 & -3 & -1 & -2\\
-3 & 3 & 0 & 0\\
-1 & 0 & 3 & -2\\
-2 & 0 & -2 & 4
\end{pmatrix}
=\widehat{F}(M)\widehat{F}(M)^{T}\text{,}%
\]
in the notation of Section \ref{sec:dlap}.

\begin{figure}[bht]%
\centering

\begin{tikzpicture} [scale=1.5]

\filldraw
(0,0) circle (1.33pt)
(0,2) circle (1.33pt)
(3,0) circle (1.33pt)
(3,2) circle (1.33pt);
\node at (-0.2,-0.13) {$v_1$};
\node at (-0.3,2) {$v_2$};
\node at (3.2,2.13) {$v_3$};
\node at (3.22,-0.15) {$v_4$};

\begin{scope}[decoration={
    markings,
    mark=at position 0.51 with {\arrow{>}}}
    ]
    \draw [postaction={decorate}, semithick] (0,2) .. controls (-0.6,1.3) and (-0.6,0.7) .. (0,0);
    \draw [postaction={decorate}, semithick] (0,0) -- (0,2);
    \draw [postaction={decorate}, very thick] (0,2) .. controls (0.4,1.3) and (0.4,0.7) .. (0,0);
    \draw [postaction={decorate}, semithick] (0,2) .. controls (-0.6,2.6) and (0.6,2.6) .. (0,2);
    \draw [postaction={decorate}, semithick] (3,2) .. controls (2.7,1.3) and (2.7,0.7) .. (3,0);
    \draw [postaction={decorate}, semithick] (3,0) .. controls (3.3,0.7) and (3.3,1.3) .. (3,2);
    \draw [postaction={decorate}, very thick] (3,2) -- (0,0);
    \draw [postaction={decorate}, semithick] (0,0) .. controls (1,-0.3) and (2,-0.3) .. (3,0);
    \draw [postaction={decorate}, very thick] (3,0) .. controls (2,0.3) and (1,0.3) .. (0,0);
\end{scope}
\node at (-0.62,1) {$e_1$};
\node at (-0.18,1) {$e_2$};
\node at (0.5,1) {$e_3$};
\node at (0.35,2.3) {$e_4$};
\node at (2.58,1) {$e_5$};
\node at (3.42,1) {$e_6$};
\node at (1.5,1.22) {$e_7$};
\node at (1.5,-0.4) {$e_8$};
\node at (1.5,0.42) {$e_9$};

\end{tikzpicture}

\caption{The graph $G^{\ast}$ in Example 1.}%
\label{laplanf2}%
\end{figure}

Notice that the trace of $\widehat{F}(M)\widehat{F}(M)^{T}$ is $16=2(m-b)$, so
in the notation of Subsection \ref{subsec}, $U$ can be taken to be an
identity matrix. As predicted by the argument of Subsection \ref{subsec}, it
turns out that $ZA=\widehat{F}(M)$ is the incidence matrix of a graph $G^{\ast}$.
This graph is pictured in Figure \ref{laplanf2}, with bold edges indicating
the spanning tree $M^{\ast}$ with $E(M^{\ast})=\{e_{3},e_{7},e_{9}\}$. It is
not difficult to verify that $G^{\ast}$ is an abstract dual of $G$, but it
happens that the two graphs are not geometric duals, i.e., they cannot be
drawn together in the plane in such a way that each graph has one vertex in
each complementary region of the other graph. One way to see this is to
observe that there is no vertex of $G^{\ast}$ incident only on $e_{7}$,
$e_{8}$ and $e_{9}$, but every drawing of $G$ has a complementary region with
boundary $\{e_{7},e_{8},e_{9}\}$. This example illustrates the fact that Theorem \ref{duals} involves abstract rather than geometric duality.%

\textbf{Example 2} In Example 1 it happens that $G$ has a maximal forest $M$
such that $Tr(\widehat{F}(M)\widehat{F}(M)^{T})=2(m-b)$. That is, the
planarity criterion of Theorem \ref{trace} is satisfied by an unreduced dual
Laplacian matrix obtained directly from a matrix of the form $I^{\prime
}+C(M)^{T}C(M)$. It is not always the case that Theorem \ref{trace} is
satisfied so readily. For instance, consider the graph $G$ of Figure
\ref{laplanf3}. Then $G$ has $m=9$ edges, none of which is a bridge. As $G$
has $5$ vertices, an unreduced dual Laplacian matrix $L^{\ast}(G)$ is a
$6\times6$ matrix. The diagonal entries of $L^{\ast}(G)$ are the dot products
with themselves of certain nonzero elements of the cycle group of $G$, and the
smallest cycles of $G$ are of length $3$, so if $Tr(L^{\ast}(G))=18$ then each
diagonal entry of $L^{\ast}(G)$ must correspond to a 3-cycle of $G$. This is
not possible for an $\widehat{F}(M)\widehat{F}(M)^{T}$ matrix, because the
$-I^{\prime}$ block of $F(M)$ guarantees that the row adjoined to $F(M)$ in
constructing $\widehat{F}(M)$ has more than $m-n+c(G)=5$ nonzero entries. We
leave it as an exercise for the reader to verify that nevertheless, $G$ does
have an unreduced dual Laplacian matrix $L^{\ast}(G)$ with $Tr(L^{\ast
}(G))=18$.

\begin{figure}[h]%
\centering

\begin{tikzpicture} [scale=1]
\filldraw
(0,0.5) circle (2pt)
(0,2) circle (2pt)
(2,2) circle (2pt)
(2,0.5) circle (2pt)
(1,3) circle (2pt);

\draw [semithick] (0,0.5) -- (0,2) -- (2,2) -- (2,0.5) -- (0,0.5);
\draw [semithick] (0,0.5) -- (2,2) -- (1,3) -- (0,2);
\draw [semithick] (0,0.5) .. controls (-1.5,2) and (-0.5,3) .. (1,3);
\draw [semithick] (2,0.5) .. controls (3.5,2) and (2.5,3) .. (1,3);

\end{tikzpicture}

\caption{Example 2.}%
\label{laplanf3}%
\end{figure}

\end{document}